\newcommand{\ZZ}{{\mathbb Z}}
\newcommand{\CC}{{\mathbb C}}
\newcommand{\QQ}{{\mathbb Q}}
\def\Fq2{{\mathbb F}_{q^2}}
\def\Fp2{{\mathbb F}_{p^2}}
\def\Gal{{\rm Gal}}
\def\End{{\rm End}}
\def\cC{{\mathcal C}}
\def\cX{{\mathcal X}}
\def\cY{{\mathcal Y}}
\def\cD{{\mathcal D}}
\numberwithin{equation}{section}
\theoremstyle{plain}
\newtheorem{thm}{Theorem}
\numberwithin{thm}{section}
\newtheorem{lem}[thm]{Lemma}
\newtheorem{pro}[thm]{Proposition}
\newtheorem{remark}[thm]{Remark}
\def\blfootnote{\xdef\@thefnmark{}\@footnotetext}
\begin{document}
\title{Explicit Families of Hyperelliptic Curves with CM Jacobians}

\keywords{Complex Multipications, hyperelliptic curves, Jacobians, abelian varieties,
 Chebyshev polynomials.}
 \subjclass[2000]{11G15, 14K22}
\author{Saeed Tafazolian and Jaap Top
   }
\date{}
\address{Departamento de Matem\'{a}tica - Instituto de Matem\'{a}tica, Estat\'{i}stica e Computação Cient\'{i}fica
(IMECC) - Universidade Estadual de Campinas (UNICAMP), Rua S\'{e}rgio Buarque de Holanda, 651, Cidade Universit\'{a}ria,  Zeferino Vaz, Campinas, SP 13083-859, Brazil}
\address{Bernoulli Institute for Mathematics, Computer Science, and Artificial Intelligence,\\
Nijenborgh~9\\9747 AG Groningen\\ the Netherlands}
\email{saeed@unicamp.br}
\email{j.top@rug.nl}



\begin{abstract}
We construct explicit families of hyperelliptic curves over $\QQ$ whose Jacobians admit complex multiplication (CM). Each curve in these families is defined by
\[
v^2 = (u+2)\,\varphi_d(u), \quad d = 2^e \text{ or } d=p \geq 3 \text{ prime},
\] 
where $\varphi_d(x)$ is the Chebyshev polynomial of degree $d$. We prove that the Jacobians are simple and determine the associated CM-fields explicitly. Our approach exploits the interplay between Chebyshev polynomials and Galois coverings, providing concrete examples of abelian varieties with CM and explicit criteria for their construction.
\end{abstract}

\maketitle

\section{Introduction}

Abelian varieties with complex multiplication (CM) occupy a central place in number theory and algebraic geometry. There are several excellent introductions to this subject,
such as \cite{ST}, \cite[Section~13.3]{BL}, \cite{Milne}, \cite{Lang}, An abelian variety $A$ over an algebraically closed field is said to admit \emph{complex multiplication (CM)} if there exists a number field $K$ of degree $2\dim(A)$ over $\QQ$ such that
\[
    K \subseteq \textrm{End}^0(A)=\textrm{End}(A)\otimes_\ZZ\QQ.
\]
If $A$ admits CM, then moreover this field $K$ can be chosen
to be a CM-field, which means it is a totally imaginary
quadratic extension of a totally real field. We then say
that $A$ has CM by the CM-field $K$. Throughout the
present note we will work in characteristic zero; for
an idea of the in various regards more subtle theory of
CM abelian varieties over fields of positive characteristic,
see, e.g., \cite{Oort}.

If $C$ is a smooth projective curve, one says that $C$ admits complex multiplication if its Jacobian $J(C)$ does.  Abelian varieties with CM have remarkable arithmetic properties: they can be defined over number fields, one has a good understanding of the Newton polygon of their reductions in
positive characteristic and even of the associated zeta function, and more.

Despite their importance, explicit constructions of curves whose Jacobians admit complex multiplication are relatively rare. 
Explicit constructions of such curves often use either the
map $\ZZ[\textrm{Aut}(C)]\to\textrm{End}(J(C))$ or the
fact that all principally polarized abelian varieties
of dimension $\leq 3$ with an indecomposable polarization
are Jacobians of curves. Moreover, if the curve $C$ admits
CM and $C\to C'$ is a finite morphism to another curve $C'$, then $C'$ admits CM as well. For a sample of papers
exploiting any
of these methods, see \cite{Ion}, \cite{BILV}, \cite{TTV}, \cite{CLR}, \cite{CMKT} and \cite[Theorem~5.2.10]{van}).

In this paper, we provide new explicit hyperelliptic curves 
defined over $\QQ$ whose Jacobians admit CM, and we
describe the corresponding CM-fields and CM types. Specifically, the curves defined using an equation
\[
v^2 = (u+2)\,\varphi_d(u),
\]
where $\varphi_d(x)$ is a Chebyshev polynomial of degree $d=2^e\geq 2$ turn out to provide such examples. 
We determine the corresponding CM-fields (which turn out to be
cyclic extensions of $\QQ$), prove that the Jacobians are 
simple, and describe explicitly the CM types involved. 

The main result of this paper is the following (the notation
$\zeta_n$ for a primitive $n$-th root of unity is used).

\begin{thm}\label{mainthm}

For $d=2^e$ resp. for $d=p$ an odd prime, the hyperelliptic curve
\[
\cC_d\colon v^2 = (u+2)\,\varphi_d(u)
\]
has a simple Jacobian, and admits CM with CM-field  generated by $\zeta_{4d} - \zeta_{4d}^{-1}$ (resp. $\zeta_p$). 
\end{thm}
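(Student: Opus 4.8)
The plan is to use the classical degree-two substitution $u = z + z^{-1}$ to pass to an auxiliary curve $\cD_d$ carrying a large cyclic automorphism group, to read off the complex multiplication of $\cD_d$ from the action on holomorphic differentials, and then to descend this structure to the quotient $\cC_d$. Since $\varphi_d(z + z^{-1}) = z^d + z^{-d}$ and $u + 2 = z^{-1}(z+1)^2$, one gets $(u+2)\varphi_d(u) = z^{-d-1}(z+1)^2(z^{2d}+1)$, so substituting $v = z^{-m}(z+1)w$ with $m = \lceil (d+1)/2\rceil$ identifies $\cC_d$ with the quotient of
\[
\cD_d\colon\ w^2 = z^{2d}+1\ \ (d\ \text{odd}), \qquad \cD_d\colon\ w^2 = z\,(z^{2d}+1)\ \ (d\ \text{even}),
\]
by the involution $\sigma\colon (z,w)\mapsto (z^{-1},\,z^{-d}w)$ resp.\ $(z^{-1},\,z^{-d-1}w)$; a Riemann--Hurwitz count gives $g(\cD_d) = d-1$ in the first case and $g(\cD_d) = d$ in the second. (The curve $\cD_d$ is in fact a cyclic cover of a projective line branched over three points, which already accounts for the CM, but I would argue directly with automorphisms.)

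First, the CM of $\cD_d$. For $d = p$ odd, the order-$2p$ automorphism $\tau\colon (z,w)\mapsto(\zeta_{2p}z,w)$ acts on the basis $z^i\,dz/w$ $(0\le i\le p-2)$ of differentials by $\zeta_{2p}^{\,i+1}$, and the characteristic polynomial of $\tau$ on $H^1$ is $\Phi_p\Phi_{2p}$, so $J(\cD_p)$ is isogenous to $A_p\times A_{2p}$ with each factor having CM by $\QQ(\zeta_p)=\QQ(\zeta_{2p})$. Moreover the quotient of $\cD_p$ by $(z,w)\mapsto(-z,w)=\tau^p$ is $\cE_p\colon w^2 = x^p+1$ with $x=z^2$, whose Jacobian is the factor $A_p$; on $\cE_p$ the automorphism $x\mapsto\zeta_p x$ gives $\QQ(\zeta_p)\hookrightarrow\End^0(J(\cE_p))$ with CM type $\Phi=\{1,2,\dots,\tfrac{p-1}{2}\}\subset(\ZZ/p)^\times$, read off from the differentials $x^j\,dx/w$. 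For $d=2^e$, the map $\rho\colon(z,w)\mapsto(\zeta_{2d}z,\zeta_{4d}w)$ has order $4d$ and acts on $z^i\,dz/w$ $(0\le i\le d-1)$ by the \emph{primitive} $4d$-th root $\zeta_{4d}^{\,2i+1}$; since $[\QQ(\zeta_{4d}):\QQ]=2^{e+1}=2g(\cD_d)$, this gives $J(\cD_d)$ full CM by $\QQ(\zeta_{4d})$ with CM type $\{1,3,\dots,2d-1\}\subset(\ZZ/4d)^\times$.

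Next, the descent to $\cC_d=\cD_d/\langle\sigma\rangle$, using that $J(\cC_d)$ is, up to isogeny, the connected fixed locus of $\sigma^*$ on $J(\cD_d)$, of dimension $g(\cC_d)$. For $d$ odd, $\sigma^*$ acts on differentials by $z^i\,dz/w\mapsto -z^{\,p-2-i}\,dz/w$, which interchanges the $A_p$- and $A_{2p}$-parts of $H^0(\cD_p,\Omega^1)$; hence the connected fixed locus is isogenous to the graph of this interchange, i.e.\ to $A_p\sim J(\cE_p)$, so $\cC_p$ has CM by $\QQ(\zeta_p)$ with the CM type $\Phi$ above. For $d$ even one computes $\sigma\rho\sigma^{-1}=\rho^{\,2d-1}$, which under $\QQ(\zeta_{4d})\hookrightarrow\End^0(J(\cD_d))$ is the order-two automorphism $\zeta_{4d}\mapsto\zeta_{4d}^{\,2d-1}=-\zeta_{4d}^{-1}$, whose fixed field is exactly $K:=\QQ(\zeta_{4d}-\zeta_{4d}^{-1})$, of degree $2^e$ (one also checks $\{1,3,\dots,2d-1\}$ is invariant under multiplication by $2d-1$, so the CM type descends). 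The elements of $K$ then commute with $\sigma^*$, hence act on the connected fixed locus $\sim J(\cC_d)$, giving $K\hookrightarrow\End^0(J(\cC_d))$ with $[K:\QQ]=2^e=2\dim J(\cC_d)$; as $K$ is a totally imaginary quadratic extension of the totally real field $\QQ(\zeta_{2d}+\zeta_{2d}^{-1})$, it is a CM field, so $\cC_d$ has CM by $K$.

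It remains to prove simplicity, for which—the CM field having degree twice the dimension—it suffices to show the CM type is primitive, i.e.\ not induced from one on a proper CM subfield. For $d=2^e$ this is automatic: $K=\QQ(\zeta_{4d}-\zeta_{4d}^{-1})$ is cyclic over $\QQ$ of $2$-power degree and totally imaginary, so each of its proper subfields is fixed by complex conjugation and hence totally real; thus $K$ has no proper CM subfield and every CM type on it is primitive. For $d=p$ odd I would check directly that $\{g\in(\ZZ/p)^\times: g\Phi=\Phi\}=\{1\}$ for $\Phi=\{1,\dots,\tfrac{p-1}{2}\}$: if $g\ge\tfrac{p+1}{2}$ then $g=g\cdot 1\notin\Phi$, while if $2\le g\le\tfrac{p-1}{2}$ the interval $\bigl(\tfrac{p-1}{2g},\tfrac{p-1}{g}\bigr]$ has length $\ge1$ and contains an integer $a\le\tfrac{p-1}{2}$ with $\tfrac{p-1}{2}<ga\le p-1$, so $ga\bmod p\notin\Phi$. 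Hence $\Phi$ is primitive and $J(\cC_p)$ is simple. The step I expect to be the main obstacle is exactly this bookkeeping of CM types—matching the eigenvalues of the cyclic automorphism on differentials to a CM type on the correct cyclotomic subfield, tracking how $\sigma$ permutes them, and proving primitivity in the odd case; verifying the covering data (that $\sigma$ is the right deck involution and the genus of $\cD_d$) is a smaller but necessary point that pins down all the dimension counts.
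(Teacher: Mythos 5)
Your proof is correct, and its overall architecture is the same as the paper's: realize $\cC_d$ as the quotient of a curve with a large cyclic automorphism group ($w^2=z(z^{2d}+1)$ for $d=2^e$, $w^2=z^{2d}+1$ for $d$ odd) by the involution $z\mapsto z^{-1}$, read the CM off the action on holomorphic differentials, and deduce simplicity from primitivity of the CM type. The genuine differences are in two places. For $d=p$ odd, the paper works directly with the element $\zeta-\zeta^{-1}\in\ZZ[\operatorname{Aut}(\cD_{2p})]$, observes it commutes with the deck involution $\sigma$, and computes its eigenvalues on the $\sigma$-invariant differentials $\omega_j-\omega_{p-j}$ to get $\QQ(\zeta_{2p}-\zeta_{2p}^{-1})=\QQ(\zeta_p)\hookrightarrow\End^0(J(\cC_p))$; you instead decompose $J(\cD_{2p})\sim A_p\times A_{2p}$ via the characteristic polynomial $\Phi_p\Phi_{2p}$ of $\tau^*$, show $\sigma^*$ swaps the factors, and identify $J(\cC_p)$ with $A_p\sim J(y^2=x^p+1)$ — this is essentially the content of the paper's closing Remark (proved there by citing Paulhus), so you buy the isogeny $J(\cC_p)\sim J(\cD_p)$ for free, at the cost of the extra bookkeeping about how $\sigma^*$ permutes eigenspaces. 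Your primitivity proof for $\{1,\dots,(p-1)/2\}$ via the interval/pigeonhole argument is a valid alternative to the paper's slicker computation that $a\sum i\equiv\sum i$ forces $a\equiv 1$ because $\sum_{i=1}^{(p-1)/2}i=(p^2-1)/8$ is a unit mod $p$. The one point you assert without justification is that $K=\QQ(\zeta_{4d}-\zeta_{4d}^{-1})$ is \emph{cyclic} over $\QQ$ for $d=2^e$: since $(\ZZ/2^{e+2}\ZZ)^\times$ is not itself cyclic for $e\geq 1$, the cyclicity of the quotient by $\langle\overline{2^{e+1}-1}\rangle$ does require an argument (the paper supplies one in Lemma~2.4(5), using that $\overline{5}$ has order $2^{e}$ and avoids that subgroup); without cyclicity, "unique involution contained in every nontrivial subgroup" — which is what makes every proper subfield totally real — does not follow. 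This is a small but real gap to fill; everything else checks out.
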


\section{Preliminaries}

In this section we collect basic definitions and results that will be used throughout the paper.  

\subsection{Abelian varieties and Jacobians}

An \emph{abelian variety} $A$ over a field $k$ is a complete, connected algebraic group. The \emph{Jacobian} $\textrm{Jac}(C)$ of a smooth projective curve $C$ of genus $g$ is a $g$-dimensional abelian variety parametrizing divisors of degree $0$ on $C$ modulo linear equivalence.

An abelian variety $A$ is said to be \emph{simple} if it contains no proper nontrivial abelian subvarieties. 
A well-known classical result (see, e.g., \cite[\S12]{CorSil})
is the following.

\begin{thm}[Poincar\'e Complete Reducibility]
Let $A$ be an abelian variety over a field $k$. 
\begin{enumerate}
    \item For any abelian subvariety $B \subseteq A$ there exists an abelian subvariety 
    $C \subseteq A$ such that the map
    \[
    B \times C \;\longrightarrow\; A, \qquad (b,c) \mapsto b+c
    \]
    is an isogeny.
    \item Consequently, every abelian variety is isogenous to a product of simple 
    abelian varieties:
    \[
    A \;\sim\; A_1^{n_1} \times A_2^{n_2} \times \cdots \times A_r^{n_r},
    \]
    where the $A_i$ are simple and pairwise not isogenous.
    \item Moreover, this decomposition is unique up to isogeny and reordering 
    of the factors.
\end{enumerate}
\end{thm}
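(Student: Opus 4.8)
The plan is to derive all three parts from the existence of a polarization on $A$. Since an abelian variety over a field is projective, it carries an ample line bundle $L$, which induces a polarization, i.e.\ an isogeny $\phi_L\colon A\to\widehat A$ to the dual abelian variety, $a\mapsto t_a^*L\otimes L^{-1}$. For part (1) the guiding idea is that a complement of $B$ should be the ``orthogonal complement of $B$ with respect to $\phi_L$'' (over $\CC$, writing $A=V/\Lambda$ and $B=W/(W\cap\Lambda)$, one can literally take $C=W^{\perp}/(W^{\perp}\cap\Lambda)$, with $W^{\perp}$ the orthogonal complement of $W$ for the Hermitian form attached to $L$). In general, write $i\colon B\hookrightarrow A$ for the inclusion, $\widehat i\colon\widehat A\to\widehat B$ for the dual morphism, and set $\psi:=\widehat i\circ\phi_L\colon A\to\widehat B$. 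Using the standard identity $\widehat i\circ\phi_L\circ i=\phi_{i^*L}$ together with the ampleness of $i^*L$ on $B$, the restriction $\psi\circ i$ is a polarization of $B$, hence an isogeny. Define $C:=(\ker\psi)^0$, the identity component of $\ker\psi$; it is an abelian subvariety of $A$.

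I would then check that the addition map $B\times C\to A$, $(b,c)\mapsto b+c$, is an isogeny. Its kernel is $\{(x,-x):x\in B\cap C\}$, and $B\cap C\subseteq B\cap\ker\psi=\ker\phi_{i^*L}$ is finite because $\phi_{i^*L}$ is an isogeny; so the addition map has finite kernel. For dimensions, $\psi$ is surjective (already $\psi(B)=\widehat B$), hence $\dim\ker\psi=\dim A-\dim\widehat B=\dim A-\dim B$, so $\dim(B\times C)=\dim A$. A homomorphism of abelian varieties of equal dimension with finite kernel is an isogeny, and (1) follows.

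Part (2) is an induction on $\dim A$: if $A$ is simple there is nothing to prove; otherwise pick a proper nontrivial abelian subvariety $B$, apply (1) to get $C$ with $B\times C\sim A$, and use the inductive hypothesis on $B$ and $C$, which have strictly smaller dimension. Grouping isogenous factors gives $A\sim A_1^{n_1}\times\cdots\times A_r^{n_r}$ with the $A_i$ simple and pairwise non-isogenous. For the uniqueness in (3), the key point is that a nonzero homomorphism $f\colon S\to S'$ of simple abelian varieties is automatically an isogeny: $\mathrm{im}(f)$ is a nonzero abelian subvariety of $S'$, hence equals $S'$, and $(\ker f)^0$ is a proper abelian subvariety of $S$, hence trivial. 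Thus $\mathrm{Hom}(A_i,A_j)\otimes\QQ=0$ whenever $A_i\not\sim A_j$, while $\mathrm{End}^0(A_i)$ is a division algebra. Given a second decomposition $A\sim\prod_j B_j^{m_j}$, composing the two isogenies gives an isogeny $\prod_i A_i^{n_i}\to\prod_j B_j^{m_j}$; writing it as a block matrix of homomorphisms between simple factors and invoking the vanishing of $\mathrm{Hom}^0$ between non-isogenous simples shows that, after reordering, it is block-diagonal with each block an isogeny $A_i^{n_i}\to B_{\sigma(i)}^{m_{\sigma(i)}}$ for a permutation $\sigma$ with $B_{\sigma(i)}\sim A_i$; comparing dimensions in each block forces $n_i=m_{\sigma(i)}$.

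The step I expect to be the real obstacle is part (1), specifically the construction of the complement. It rests on two inputs from the basic theory: that $A$ is projective, so an ample $L$ and hence a polarization $\phi_L$ exists, and that the restriction $i^*L$ to the abelian subvariety $B$ stays ample — this is exactly what makes $\psi\circ i$ an isogeny and makes the dimension count and the finiteness of $B\cap C$ work. A secondary point worth noting is that all objects above ($L$, the dual morphism $\widehat i$, and $C=(\ker\psi)^0$) can be taken over the base field $k$; alternatively one argues over $\overline k$ and observes that $C$ is $\Gal(\overline k/k)$-stable, hence descends. In the characteristic-zero setting of this paper no further care is needed.
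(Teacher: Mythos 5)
The paper does not actually prove this theorem: it is quoted as a classical result with a pointer to \cite[\S12]{CorSil}, and your argument is precisely the standard proof given there --- the complement $C=(\ker(\widehat{i}\circ\phi_L))^0$ constructed from a polarization together with the identity $\widehat{i}\circ\phi_L\circ i=\phi_{i^*L}$ for part (1), induction on dimension for part (2), and the vanishing of $\mathrm{Hom}$ between non-isogenous simple factors plus the division-algebra structure of $\mathrm{End}^0$ of a simple factor for part (3). All steps are correct; the only caveat, which you already flag, is that over a field of positive characteristic one should take the reduced identity component of $\ker\psi$ to get an abelian subvariety, a point that is immaterial in the characteristic-zero setting of this paper.
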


 \subsection{Complex multiplication and CM type}

Let $A$ be an abelian variety of dimension $g$ defined over a subfield $k$ of $\CC$. 
We assume that $A$ has \emph{complex multiplication} (CM)  by
the CM-field $K$, i.e.,  
\[
K \hookrightarrow \End^{0}(A) := \End(A)\otimes_{\ZZ}\QQ
\]
and $K$ has degree $2g$ over $\QQ$.

The action of $K$ on the tangent space $T_{0}A \otimes_{k} \CC$ at the origin
decomposes as a direct sum of $g$ one-dimensional eigenspaces. 
This decomposition is described by a set of embeddings
\[
\Phi \subseteq \mbox{Hom}(K,\CC), \qquad |\Phi| = g,
\]
called the \emph{CM type} of $A$.  Concretely, $\Phi=\{\sigma_1,\dots,\sigma_g\}$ is a 
choice of one embedding from each pair $\{\sigma,\overline{\sigma}\}$ of complex conjugate 
embeddings of $K$ into $\CC$, and the action of $K$ on 
$T_{0}A\otimes_k\CC$ is given via~$\Phi$.  

A CM type $\Phi$ is called \emph{primitive} if it is not induced from a CM type of a proper CM subfield of $K$. 
Here ``induced from the CM subfield $L\subset K$'' means:
there is a CM type $\Phi_L\subseteq \text{Hom}(L,\CC)$
such that $\Phi=\{\sigma\in \text{Hom}(K,\CC)\;:\; \sigma_{\restriction_L}\in \Phi_L\}$.

The following result is well known (see \cite[\S8]{ST}).

\begin{thm}[Primitive CM type implies simplicity]\label{thm:primitive-simple}
Let $A$ be an abelian variety over $\CC$ with CM by a CM-field $K$, and assume that the associated CM type is primitive. Then $A$ is simple.
\end{thm}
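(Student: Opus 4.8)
The plan is to prove the contrapositive: if $A$ is \emph{not} simple, then its CM type $\Phi$ is induced from a CM type of a \emph{proper} CM subfield of $K$, contradicting primitivity. The two ingredients are the structure of $\End^{0}(A)$ coming from Poincar\'e Complete Reducibility, and the standard dictionary between CM data and the eigenspace decomposition of the tangent space at the origin.

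First I would apply Poincar\'e Complete Reducibility to write $A \sim \prod_i A_i^{n_i}$ with the $A_i$ simple and pairwise non-isogenous, so that $\End^{0}(A) \cong \prod_i M_{n_i}(D_i)$ with $D_i = \End^{0}(A_i)$. Composing the unital embedding $K \hookrightarrow \End^{0}(A)$ with the $i$-th projection realizes $K$ as a subfield of $\End^{0}(A_i^{n_i})$. Since any subfield $L$ of the endomorphism algebra of an abelian variety $B$ makes $H_1(B,\QQ)$ into an $L$-vector space and so has $[L:\QQ] \le 2\dim(B)$, we get $2\dim(A) = [K:\QQ] \le 2n_i\dim(A_i)$ for each $i$; as $\sum_i n_i\dim(A_i) = \dim(A)$, a single factor survives: $A \sim A_0^{m}$ with $A_0$ simple, $m\dim(A_0) = \dim(A)$, and $K$ a subfield of $\End^{0}(A) = M_m(D_0)$ of the maximal possible degree $2m\dim(A_0)$. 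A short argument with the classification of endomorphism algebras (the only Albert type admitting a subfield of degree $2\dim(A_0)$ being type IV with trivial division part) then forces $D_0$ to be a CM field $K_0$ of degree $2\dim(A_0)$, so that $A_0$ has CM by $K_0$ with some CM type $\Phi_0$; and $K$, being a subfield of degree $m$ of the central simple $K_0$-algebra $M_m(K_0)$ of degree $m$, equals its own centralizer there and hence contains the centre $K_0$. Thus $K_0 \subseteq K$ with $[K:K_0] = m$.

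Now assume $A$ is genuinely not simple, so $m \ge 2$ and $K_0 \subsetneq K$ is a proper CM subfield. Via the isogeny $A_0^{m} \sim A$, the tangent space satisfies $T_0 A \cong (T_0 A_0)^{m}$ compatibly with the $K_0$-action (the centre $K_0$ of $\End^{0}(A_0^m)$ acting diagonally), so the $\sigma_0$-eigenspace of $T_0 A$ has dimension $m$ for $\sigma_0 \in \Phi_0$ and $0$ otherwise. On the other hand $T_0 A = \bigoplus_{\sigma \in \Phi}\CC_\sigma$ as a $K$-module, and restricting scalars to $K_0$ its $\sigma_0$-eigenspace is spanned by the $\sigma \in \Phi$ with $\sigma|_{K_0} = \sigma_0$. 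Since each $\sigma_0 \in \mathrm{Hom}(K_0,\CC)$ has exactly $[K:K_0] = m$ extensions to $K$ (we are in characteristic $0$), comparing these two descriptions shows that $\Phi$ contains either all $m$ extensions of $\sigma_0$ or none, according as $\sigma_0 \in \Phi_0$ or not. Hence $\Phi = \{\sigma \in \mathrm{Hom}(K,\CC) : \sigma|_{K_0} \in \Phi_0\}$ is induced from the CM type $\Phi_0$ of the proper CM subfield $K_0$, contradicting primitivity; therefore $A$ is simple.

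I expect the main obstacle to be the first paragraph: isolating a single simple isogeny factor and, above all, showing $\End^{0}(A_0)$ must be precisely a CM field of degree $2\dim(A_0)$ and not a larger division algebra. This combines the elementary degree bound for subfields of endomorphism algebras with the numerical constraints in Albert's classification; alternatively one may quote directly from \cite{ST} the structure statement that a CM abelian variety is isogenous to a power of a simple abelian variety having CM by a CM subfield, together with the compatible behaviour of CM types under isogeny. Granting that input, the eigenspace comparison in the penultimate paragraph is formal; the only further fact used is that the polarization on $A_0$ (the Riemann relations) forces $\Phi_0 \sqcup \overline{\Phi_0} = \mathrm{Hom}(K_0,\CC)$, which is exactly what makes $\Phi_0$ a CM type of $K_0$.
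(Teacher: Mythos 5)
Your proposal is correct, and it is essentially the standard proof of this statement: the paper itself offers no argument here, merely citing \cite[\S 8]{ST}, and what you have written is a faithful reconstruction of the argument found there (and in Lang's book). The one genuinely nontrivial input --- that the unique simple isogeny factor $A_0$ must itself have CM by a CM subfield $K_0$ of degree $2\dim A_0$, which needs Albert's classification (or the corresponding structure result in \cite{ST}) to rule out larger division algebras --- is exactly the step you flag as the main obstacle, and your proposed handling of it (only type IV with trivial division part survives the degree bound $[L:\QQ]\le 2\dim B$ for subfields $L\subseteq \End^0(B)$) is the right one. The remaining steps are all sound: isolating a single isogeny factor $A\sim A_0^m$ by the dimension count, deducing $K_0\subseteq K$ because a maximal \'etale subalgebra of $M_m(K_0)$ is self-centralizing and therefore contains the centre, and the eigenspace comparison on $T_0A\cong (T_0A_0)^m$ showing that $\Phi$ consists of all or none of the $m$ extensions of each embedding of $K_0$, i.e.\ that $\Phi$ is induced from $\Phi_0$, contradicting primitivity.
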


\subsection{Geometric interpretation via tangent spaces}

Let $C$ be a smooth curve with Jacobian $\textrm{Jac}(C)$ and $\tau \in \mbox{Aut}(C)$ an involution. 
The tangent space at the origin of $\textrm{Jac}(C)$ can be identified 
with the space of regular $1$-forms on $C$, i.e.,
\[
T_0 \textrm{Jac}(C) \simeq H^0(C, \Omega^1_C).
\]
If an endomorphism of $\textrm{Jac}(C)$ preserves the subspace of differentials invariant under $\tau$, then it induces a well-defined endomorphism on the quotient Jacobian $\textrm{Jac}(C/\langle \tau \rangle)$. This geometric fact underlies many constructions of CM Jacobians: 
one studies the restriction of a global endomorphism induced
from some element in $\ZZ[\textrm{Aut}(C)]$ by checking its action on the tangent space which in fact boils down to
its action on the space of regular $1$-forms on $C$.
This reduces the problem to linear algebra on the space of differentials. For example, the
CM type of certain CM Jacobians $\textrm{Jac}(C/\langle\tau\rangle)$
is determined in this way; concrete examples illustrating this approach are
presented in \cite[Proposition~4]{TTV}, \cite[Theorem~5.2.10]{van}, and \cite[Lemmas~4.2.1, 4.2.5]{TTCM}.

 \subsection{Some abelian CM-fields}
Here some facts and properties of the
CM-fields that play a role in the current text, are collected.

For any $d\in\ZZ_{\geq 1}$ the $d$-th cyclotomic field $\QQ(\zeta_d)\subset\CC$, 
with $\zeta_d=e^{2\pi i/d}\in\CC$,
is a Galois extension of $\QQ$. Its
Galois group is identified with
$(\ZZ/d\ZZ)^\times$ where
$\bar{n}\in (\ZZ/d\ZZ)^\times$
corresponds to the automorphism $\sigma_n$ defined by $\zeta_d\mapsto \zeta_d^n$.
In particular $\overline{-1}$ corresponds
to complex conjugation on $\QQ(\zeta_d)$.
The standard Galois correspondence in this situation yields
a bijection between the subfields $K\subseteq \QQ(\zeta_d)$
and the subgroups $H\subseteq (\ZZ/d\ZZ)^\times$,
explicitly
\[K\mapsto H:=\{\sigma_n\;:\;\sigma_n(k)=k\;\textrm{for}\;k\in K\}\]
and 
\[H\mapsto \QQ(\zeta_d)^H:=\{\alpha\in\QQ(\zeta_d)\;:\;
\sigma(\alpha)=\alpha\;\textrm{for}\;\sigma\in H\}.\]
If here subgroup $H$ corresponds to subfield $K$, then
$K$ is Galois over $\QQ$ with Galois group $(\ZZ/d\ZZ)^\times/H$. As a consequence, $K$ is a CM-field precisely when 
$\overline{-1}\not\in H$, and $K$ is a totally real field otherwise.

The special case that will occur in the remainder of this text,
is (with $i=\sqrt{-1}$) the field
\[K_d:=\QQ(\zeta_d-\zeta_d^{-1})=\QQ(2i\sin(2\pi/d))\subseteq\QQ(\zeta_d).\]
Some properties of $K_d$ are collected here.
\begin{lem}\label{fields}
    \begin{enumerate}
        \item $K_d$ is a CM-field precisely when $d\geq 3$.
        \item In case $4\nmid d$ one has $K_d=\QQ(\zeta_d)$.
        \item If $4|d$, then the CM-field $K_d\subset\QQ(\zeta_d)$ by the
        Galois correspondence relates to the subgroup
        of $(\ZZ/d\ZZ)^\times$ generated by
        $\overline{-1+d/2}$.
        \item The subgroup mentioned in (3) is trivial when
        $d=4$ and has order $2$ otherwise.
        \item In case $d=2^e\geq 8$, the Galois group of
        the CM-field $K_d$ over $\QQ$ is a cyclic group of order $2^{e-2}=d/4$.
        \item Every proper subfield of $K_{2^e}$ is totally real.
    \end{enumerate}
\end{lem}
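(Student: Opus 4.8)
The plan is to use the Galois correspondence recalled just above the lemma: identify the subgroup $H_d\subseteq(\ZZ/d\ZZ)^\times$ that fixes $K_d=\QQ(\zeta_d-\zeta_d^{-1})$, and then read off all six assertions from the structure of $(\ZZ/d\ZZ)^\times$ together with that of $H_d$. To compute $H_d$ I would note that $\sigma_n$ fixes $\zeta_d-\zeta_d^{-1}$ iff $\zeta_d^{\,n}-\zeta_d^{-n}=\zeta_d-\zeta_d^{-1}$; multiplying through by $\zeta_d^{\,n}$ and factoring yields $(\zeta_d^{\,n-1}-1)(\zeta_d^{\,n+1}+1)=0$, so either $n\equiv 1\pmod d$ or $\zeta_d^{\,n+1}=-1$. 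The second alternative has a solution only when $d$ is even, and then forces $n\equiv d/2-1\pmod d$; moreover $d/2-1$ is a unit modulo $d$ precisely when $4\mid d$ (any common divisor of $d/2-1$ and $d$ divides $2$). Hence $H_d=\{\overline 1\}$ when $4\nmid d$, which gives (2), and $H_d=\{\overline 1,\ \overline{-1+d/2}\}=\langle\overline{-1+d/2}\rangle$ when $4\mid d$, which is (3); here $(-1+d/2)^2\equiv 1\pmod d$ because $4\mid d$, so this subgroup has order $1$ or $2$.

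Part (1) then follows from the criterion recalled before the lemma, that $\QQ(\zeta_d)^{H}$ is a CM-field exactly when $\overline{-1}\notin H$: one checks $\overline{-1}\notin H_d$ for every $d\ge 3$ (in that range neither $-1\equiv 1$ nor $-1\equiv d/2-1\pmod d$), while $K_1=K_2=\QQ$ since there $\zeta_d-\zeta_d^{-1}=0$. For (4): when $d=4$ we have $-1+d/2=1$, so $H_4$ is trivial (indeed $K_4=\QQ(\zeta_4)=\QQ(i)$), whereas for $4\mid d$ with $d>4$ one has $-1+d/2\not\equiv 1\pmod d$, so $H_d$ has order exactly $2$.

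It remains to treat $d=2^e$, where I would invoke the standard isomorphism $(\ZZ/2^e\ZZ)^\times\cong\langle\overline{-1}\rangle\times\langle\overline 5\rangle\cong\ZZ/2\ZZ\times\ZZ/2^{e-2}\ZZ$, valid for $e\ge 3$. The decisive computation is to locate $\overline{-1+2^{e-1}}$ in this splitting: since $1-2^{e-1}\equiv 1+2^{e-1}\pmod{2^e}$, and $1+2^{e-1}$ is the unique element of order $2$ inside $\langle\overline 5\rangle$ (the classes $\equiv 1\bmod 4$), it equals $\overline 5^{\,2^{e-3}}$; hence $\overline{-1+2^{e-1}}=-\,\overline 5^{\,2^{e-3}}$ corresponds to the diagonal element $(1,2^{e-3})$ of $\ZZ/2\ZZ\times\ZZ/2^{e-2}\ZZ$, of order $2$ (again confirming (4) in this case). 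Quotienting by $\langle(1,2^{e-3})\rangle$ gives a group of order $2^{e-2}=d/4$ in which the image of the generator of the second factor still has order $2^{e-2}$ and hence generates the quotient; so the quotient is cyclic of order $d/4$, which is (5). For (6): $\Gal(K_{2^e}/\QQ)$ is cyclic of $2$-power order, so its unique subgroup of order $2$ is generated by complex conjugation $c$; any proper subfield $F$ with $\QQ\subseteq F\subsetneq K_{2^e}$ corresponds to a nontrivial subgroup of this cyclic group, which therefore contains $\langle c\rangle$, whence $c$ fixes $F$ and $F$ is totally real. The cases $e\le 2$ are immediate: $K_1=K_2=\QQ$, and the only proper subfield of $K_4=\QQ(i)$ is the totally real field $\QQ$.

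The step I expect to be the main obstacle is the bookkeeping in locating $\overline{-1+2^{e-1}}$ inside $(\ZZ/2^e\ZZ)^\times\cong\ZZ/2\ZZ\times\ZZ/2^{e-2}\ZZ$ — in particular verifying that it is the diagonal generator and not an element of $\langle\overline{-1}\rangle$ or of $\langle\overline 5\rangle$, which is exactly what forces the quotient to be cyclic of order $d/4$ rather than $\ZZ/2\ZZ\times\ZZ/2^{e-3}\ZZ$. Once $H_d$ is correctly pinned down, everything else is routine group theory together with the cited cyclotomic criterion.
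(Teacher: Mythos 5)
Your proposal is correct and follows essentially the same route as the paper: you identify the stabilizer $H_d$ of $\zeta_d-\zeta_d^{-1}$ (your factorization $(\zeta_d^{\,n-1}-1)(\zeta_d^{\,n+1}+1)=0$ yields exactly the paper's two cases $n\equiv 1$ or $n\equiv -1+d/2 \bmod d$, which the paper obtains by comparing sines), and for (5) you use the splitting $\langle\overline{-1}\rangle\times\langle\overline 5\rangle$ where the paper uses the equivalent exact sequence, both hinging on $\overline{-1+2^{e-1}}\notin\langle\overline 5\rangle$. The argument for (6) via the unique order-$2$ element being complex conjugation is identical to the paper's.
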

\begin{proof}
  Assertion (1) follows from the discussion preceding the
  statement and the observation that $i\sin(2\pi/d)$ is real
  only when $d=1,2$.\\
  To prove (2) and (3) and (4), one observes that an automorphism of $\QQ(\zeta_d)$
  coming from $\overline{n}\in (\ZZ/d\ZZ)^\times$ fixes $\zeta_d-\zeta_d^{-1}$
  precisely when $\sin(2\pi/d)=\sin(2\pi n/d)$. The latter equality means
  that one of 
  \[\begin{array}{ccl} (\textrm{a}) &&1\equiv n\bmod d,\\ (\textrm{b})&&2\equiv d-2n\bmod 2d\end{array} \]
  holds. Case (a) means that we have the identity automorphism.
  Case~(b) can only occur when $2|d$, and then $n\equiv -1+d/2\bmod d$. However,
  as $n$ is supposed to be a unit modulo $d$, this also means that $d/2$ has
  to be even, i.e., $4|d$. In conclusion, the subgroup of $(\ZZ/d\ZZ)^\times$
  related to $\QQ(\zeta_d-\zeta_d^{-1})\subset\QQ(\zeta_d)$ in the 
  Galois correspondence, is trivial when $4\nmid d$. Hence in those
  situations the fields coincide, proving (2). Assertion (3) is clear from
  the same conclusion, and (4) is simply the observation that
  $-1+4/2=1$ and for $4|d$ such $d>4$ one has $\overline{-1+d/2}\in (\ZZ/d\ZZ)^\times$ has order $2$.

  To prove (5), from (3) and (4) it follows, with $d=2^e\geq 8$, that $\textrm{Gal}(K_d/\QQ)$ is isomorphic to $(\ZZ/d\ZZ)^\times/\langle \overline{-1+d/2}\rangle$.
  For completeness we include the following elementary reasoning. Consider
  the short exact sequence
  \[ 1\to \{\overline{n}\in (\ZZ/2^e\ZZ)^\times\;:\;n\equiv 1\bmod 4\}\hookrightarrow (\ZZ/2^e\ZZ)^\times\stackrel{\bmod 4}{\longrightarrow}
  (\ZZ/4\ZZ)^\times\to 1.\]
  The group on the left has order $2^{e-2}$, and $5\bmod 2^e$ is contained in it.
  Since $5^{2^{e-3}}\equiv 1+2^{e-1}\bmod 2^e$, the order of $5\bmod 2^e$
  equals $2^{e-2}$. Hence $\overline{5}$ generates the group on the left.
  From $\overline{-1+2^{e-1}}\not\in\langle \overline{5}\rangle$ one now
  concludes that the homomorphism $\langle \overline{5}\rangle \to
  (\ZZ/2^e\ZZ)^\times/\langle \overline{-1+2^{e-1}}\rangle$ given by
  $\overline{a}\mapsto \overline{a}\bmod \langle \overline{-1+2^{e-1}}\rangle$
  is injective. As both groups have the same cardinality $2^{e-2}$, this 
  shows (5).\\
  Assertion (6) is evident for $e\leq 2$. In the remaining situations we know
  that $\textrm{Gal}(K_{2^e}/\QQ)$ is a cyclic group of order $2^{e-2}$.
  This group contains a unique element of order $2$, and evidently this
  element is `complex conjugation'. Moreover, every nontrivial
  subgroup of the Galois group will contain this element. As a consequence,
  the subfield of $K_{2^e}$ corresponding to such a subgroup, is totally real.
  This completes the proof.
\end{proof}

\subsection{Chebyshev polynomials}
The \emph{Chebyshev polynomials of the first kind} $(\varphi_d)_{d\ge 0}$ are defined recursively by
\[
\varphi_0(x) = 2, \quad \varphi_1(x) = x, \quad \varphi_{d+1}(x) = x \varphi_d(x) - \varphi_{d-1}(x) \text{ for } d\ge 1.
\]
They satisfy the functional equation
\[
\varphi_d\Bigl(x + \frac{1}{x}\Bigr) = x^d + x^{-d}.
\]
These polynomials play an important role in the construction of certain hyperelliptic curves with CM Jacobians.
The basic idea is that a curve $\cC$ defined from an
equation $v^2=h(u)\varphi_d(u)$ where $h$ is a suitable
rational function, is covered by the curve $\cD$ defined from
$v^2=h(u+1/u)(u^d+u^{-d}$ via $u\mapsto x=u+1/u$.
Choices of $h$ such that $\textrm{Aut}(\cD)$ is large, then
may be exploited to obtain nontrivial elements of
$\textrm{End}(\textrm{Jac}(\cC))$.

\section{Proof of the main result}

The proof of Theorem~\ref{mainthm}
naturally splits in two distinct cases. 

\medskip
\noindent
\textbf{Case 1.} Suppose that $d=2^{e}$ for some integer $e\geq 1$.  

Starting more generally with {\em any}
positive $d\in 2\mathbb{Z}$, let $\cX_d$ be the curve defined by the affine equation
\[
y^{2} = x\bigl(x^{2d} + 1\bigr).
\]
In $\text{Aut}(\cX_d)$ one has the involution $\tau$ given by
\[
\tau(x,y) = \left( \frac{1}{x}, \, \frac{y}{x^{d+1}} \right)
\]
and the automorphism $\zeta$ defined by
\[ \zeta(x,y) = (\zeta_{4d}x,\zeta_{4d}y)\]
where $\zeta_{4d}$ denotes a primitive $4d$th root of unity.
Note that $\tau\zeta\tau=\zeta^{2d-1}$,
hence, for example, $\tau$ and $\zeta^2$ generate a dihedral group of
order $4d$. One notices that
$\zeta^{2d}$ is the hyperelliptic involution of the curve $\cX_d$.
For any $\sigma\in\textrm{Aut}(\cX_d)$
we denote by $[\sigma]$ the corresponding automorphism of the
jacobian $\textrm{Jac}(\cX_d)$.
Then $[\zeta^{2d}]=[-1]$
and $[\zeta]+[\tau\zeta\tau]=[\zeta]-[\zeta^{-1}]$ commutes with $[\tau]$.

We claim that the quotient curve $\cC_d = \cX_d / \langle \tau \rangle$ is described by the equation
\[
v^{2} = (u + 2)\,\varphi_{d}(u).
\]
Indeed, 
\[
u = x + \frac{1}{x}, 
\qquad 
v = y\,\frac{x + 1}{x^{1 + d/2}}
\]
are $\tau$-invariant functions, and
\[
v^{2} = (x^{2d+1}+x)(x+2+x^{-1})x^{-d-1}=(u + 2)\,\varphi_{d}(u).
\]
Denoting by $k(x,y)$ the function field of $\cX_d$ over any field $k$
of characteristic not dividing $2d$, 
one obtains a diagram of field extensions

\[\begin{array}{ccccc}
k(x,y) & \stackrel{\scriptstyle{2}}{\supset} & 
k(x,y)^{\langle \tau \rangle} & \supset & k(u,v)\\
\cup\scriptstyle{2} && \cup && \cup\scriptstyle{2} \\
k(x) & \stackrel{\supset}{\scriptstyle{2}} & k(u) & = & k(u)
\end{array}\]
from which one concludes $k(x,y)^{\langle \tau \rangle}=k(u,v)$. The claim is an easy
consequence of this.

The degree $2$ map $\cX_d\to\cC_d$
induces a homomorphism $\textrm{Jac}(\cC_d)\to\textrm{Jac}(\cX_d)$ with
finite kernel.
Following the argument in \cite{TTV}, to prove our main result it suffices to show that the endomorphism
\[
[ \zeta] - [ \zeta]^{-1}
\]
restricts to an endomorphism
of the image of $\textrm{Jac}(\cC_d)$
in $\textrm{Jac}(\cX_d)$. This will be the case if the associated
tangent map on the tangent space at the origin of $\textrm{Jac}(\cX_d)$ preserves the subspace on which the involution $\tau$ acts trivially. Equivalently, it is enough to verify that the element
\[
 \zeta - \zeta^{-1} \in \mathbb{Z}[\operatorname{Aut}(\cX_d)]
\]
stabilizes the subspace of $\tau$-invariant differentials in
\[
H^0(\cX_d, \Omega^1_{\cX}).
\]
The observation that $\tau$ and $\zeta+\tau\zeta\tau$ commute in $\mathbb{Z}[\operatorname{Aut}(\cX_d)]$
implies that this is indeed the case.

Explicitly, a basis of the space of regular differentials on $\cX_d$ is
\[
\{ \omega_j := \frac{x^{j-1} dx}{y}, \quad 1 \le j \le d \},
\]
and a basis for the subspace of $\tau$-invariant differentials is
\[
\{ \omega_j - \omega_{d-j+1}, \quad 1 \le j \le d/2 \}.
\]

The equality
\[
(\zeta-\zeta^{-1})^*(\omega_j-\omega_{d-j+1}) 
= (\zeta_{4d}^{2j-1}-\zeta_{4d}^{-(2j-1)})(\omega_j-\omega_{d-j+1})
\]
shows
\[\mathbb{Q}(\zeta_{4d}-\zeta_{4d}^{-1})\hookrightarrow \textrm{End}^0(\textrm{Jac}(\cC_d) \]
and yields the associated action on the
tangent space at the origin.
From Lemma~\ref{fields} and the
assumption $4|d$ one deduces
$[\mathbb{Q}(\zeta_{4d}-\zeta_{4d}^{-1}):\mathbb{Q}]=\phi(4d)/2$.
Since $\dim\,\textrm{Jac}(\cC_d)=d/2$,
the given embedding of rings makes
$\textrm{Jac}(\cC_d)$ a CM Jacobian
precisely when  $\phi(4d)=2d$,
or equivalently, when $d=2^e$ for some
$e\geq 1$.

Assuming $d=2^e\geq 2$, Lemma~\ref{fields}(6) implies that
the corresponding CM-type is primitive.
 Therefore, $\textrm{Jac}(\cC_{2^e})$ is simple.

 Note that the action of
 $(\zeta-\zeta^{-1})^*$ on $\tau$-invariant differentials as described above,
 shows that the CM-type here is described as the subset
 \[ \{1, 3, 5, \ldots, 2^e-1\}\subset(\mathbb{Z}/2^{e+2}\mathbb{Z})^\times/\langle 2^{e+1}-1\rangle\cong\textrm{Gal}(\mathbb{Q}(\zeta_{2^{e+2}}-\zeta_{2^{e+2}}^{-1})/\mathbb{Q}).\]

\medskip
\noindent
\textbf{Case 2.} 
Suppose now that $d  \geq 3$ is odd.  
Let $\cD_{2d}$ be the curve defined by
\[
\cD_{2d} : y^{2} = x^{2d} + 1
\]
and as before, denote by $\cC_d$ the curve defined by
\[
\cC_d\colon  y^{2} = (x+2)\varphi_{d}(x).
\]
As in the proof of Theorem~4.1 in \cite{TT},
let $\sigma$ be the involution on $\cD_{2d}$ defined by $\sigma(x,y)=(1/x,y/x^d)$.
The quotient of $\cD_{2d}$ by $\sigma$ is the curve
$\cC_d$. Indeed, similar to the argument
in Case~1 above, the functions $u=x+x^{-1}$ and $v=y(1+x)x^{-(d+1)/2}$
generate the field of functions invariant under $\sigma$,
and one computes
\[ v^2=y^2\cdot x^{-(p+1)}\cdot(x+1)^2=(x^p+x^{-p})(x+2+x^{-1})=(u+2)\varphi_p(u).\]
Define the subgroup $G \subset \mbox{Aut}(\cD_{2d})$ generated by
\[
\zeta: (x,y) \mapsto (\zeta_{2d} x, \,  y),
\]
where $\zeta_{2d}$ is a primitive $2d$-th root of unity. Let $[\zeta]$ denote the induced automorphism on the Jacobian $\textrm{Jac}(\cD_{2d})$.

 A basis of the space of regular differentials on $\cD_{2d}$ is
\[
\{ \omega_j := \frac{x^{j-1} dx}{y}, \quad 1 \le j \le d-1 \},
\]
and the $\sigma$-invariant differentials admit as a basis
\[
\{ \omega_j - \omega_{d-j}, \quad 1 \le j \le (d-1)/2 \}.
\]

Analogous to Case~1, this leads to
\[
\QQ(\zeta_{2d}-\zeta_{2d}^{-1}) \subset \mbox{End}^0(\textrm{Jac}(\cC_d)),
\]
using
\[
(\zeta-\zeta^{-1})^{*}(\omega_j-\omega_{d-j}) 
= (\zeta_{2d}^{j}-\zeta_{2d}^{-j})(\omega_j-\omega_{d-j}).
\]

 Since $d$ is odd, one has (using, e.g., Lemma~\ref{fields}(2)) that
$\mathbb{Q}(\zeta_{2d}-\zeta_{2d}^{-1})=\mathbb{Q}(\zeta_{d})$,
and the embedding of rings described above
makes $\textrm{Jac}(\cC_d)$ a CM Jacobian precisely when
$\phi(d)=d-1$, or equivalently, when $d=p$ is a prime number (here: odd).

For such $d=p\geq 3$ prime, the corresponding CM-type is
\[
    S = \{\,1,2,\dots,(p-1)/2\,\} \subset (\mathbb{Z}/p\mathbb{Z})^\times
    \;\cong\; \Gal(\mathbb{Q}(\zeta_p)/\mathbb{Q}),
\]
where an integer $a$ denotes the automorphism given by $\zeta_p \mapsto \zeta_p^a$.

A standard criterion (phrased here for a CM-field $K$ that is Galois over $\mathbb{Q}$) states: the CM-type $(K,S)$ is \emph{induced} from a CM-type of a proper CM subfield $F\subset K$ if and only if $S$ is a union $\cup  H\sigma_j$ of cosets of the nontrivial subgroup $H=\Gal(K/F)\subset G$.
In other words, $H$ acts nontrivially on the set $S$.

In our situation, with \(K = \mathbb{Q}(\zeta_p)\), 
it follows from the following classical result (see, for instance, \cite[page~24]{Lang}, which uses
\cite[Prop.~26]{ST}) 
that the associated CM-type is primitive, and consequently $\textrm{Jac}(\cC_p)$ is simple.

\begin{pro}
Let \(p\ge 3\) be an odd prime and \(K=\mathbb{Q}(\zeta_p)\) the $p$-th cyclotomic field.

For \(1\le i\le n\), \(\varphi_i\in\Gal(K/\mathbb{Q})\) denotes the automorphism determined by
\(\varphi_i(\zeta_p)=\zeta_p^i\).
Then the CM-type
\[
S=\{\varphi_1,\varphi_2,\dots,\varphi_{(p-1)/2}\}
\]
is primitive.
\end{pro}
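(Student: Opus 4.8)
The plan is to apply the stated criterion: the CM-type $S=\{\varphi_1,\dots,\varphi_{(p-1)/2}\}$ is \emph{not} primitive if and only if there is a nontrivial subgroup $H\subseteq G=\Gal(K/\QQ)\cong(\ZZ/p\ZZ)^\times$ such that $S$ is a union of cosets of $H$. Since $G$ is cyclic of order $p-1$, for every divisor $m\mid p-1$ with $m>1$ there is exactly one subgroup $H$ of order $m$, namely $H=\{\varphi_a : a\in((\ZZ/p\ZZ)^\times)^m\}$ — the $m$-th powers. So I would assume for contradiction that $S$ is a union of $H$-cosets for some such $H$, and derive a contradiction by a counting/structure argument.

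The key observation is that $S=\{1,2,\dots,(p-1)/2\}$ is precisely a set of representatives for $G/\langle -1\rangle$, i.e.\ $S$ contains exactly one of each pair $\{a,-a\}=\{a,p-a\}$. If $S$ were a union of cosets of a subgroup $H$ with $|H|=m>1$, then $-1\notin H$ (otherwise $S$ would contain both $a$ and $-a$ for some $a$, contradicting the representative property), so $m$ is odd; hence $m\geq 3$ and $m\mid p-1$. Now I would count: $|S|=(p-1)/2$ must be a multiple of $|H|=m$, so $m\mid (p-1)/2$. The contradiction comes from examining the coset containing $1$: if $1\in S$ then the whole coset $H=H\cdot 1$ lies in $S=\{1,\dots,(p-1)/2\}$. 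But $H$, being the subgroup of $m$-th powers with $m$ odd and $m\geq 3$, necessarily contains an element $> (p-1)/2$.

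The main obstacle — and the step requiring genuine care — is proving that last claim: that the subgroup of $m$-th powers in $(\ZZ/p\ZZ)^\times$, for $m$ an odd divisor of $p-1$ with $m\geq 3$, always contains a residue class with representative in $\{(p+1)/2,\dots,p-1\}$. Equivalently, $H\not\subseteq\{1,\dots,(p-1)/2\}$. I would argue as follows: the set $\{1,\dots,(p-1)/2\}$ is not closed under multiplication mod $p$ (it is far from a subgroup), but more directly, since $-1\notin H$ and $H$ is a group, if $H\subseteq\{1,\dots,(p-1)/2\}$ then for $a,b\in H$ the product $ab\bmod p$ again lies in $\{1,\dots,(p-1)/2\}$; taking $a$ to be the largest element of $H$ and $b=a$, one examines $a^2\bmod p$. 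A cleaner route: let $g$ be a generator of the cyclic group $H$ (it is cyclic, order $m$); the powers $g,g^2,\dots,g^m=1$ would all have to lie in $\{1,\dots,(p-1)/2\}$, but consecutive powers change by a factor $g$, and one shows that such an orbit of size $m\geq 3$ under multiplication by $g\neq 1$ cannot stay inside the interval $[1,(p-1)/2]$ — indeed the pigeonhole/averaging argument shows the orbit of any nontrivial multiplicative action of order $\geq 3$ must escape any interval of length $(p-1)/2$. If a fully elementary argument proves elusive, an alternative is to invoke directly that the subfield $F=K^H\subset\QQ(\zeta_p)$ is a real field precisely when $-1\in H$, and since here $-1\notin H$ the field $F$ is a genuine CM-field, and then argue via the well-definedness of the induced CM-type $\Phi_F=\{\sigma|_F:\sigma\in S\}$: for $\Phi_F$ to be a CM-type on $F$ one needs $S$ to hit each pair $\{\psi,\bar\psi\}$ in $\Hom(F,\CC)$ in a set of full $H$-cosets, and one shows the specific $S=\{1,\dots,(p-1)/2\}$ fails this, e.g.\ by producing an explicit $a\in\{1,\dots,(p-1)/2\}$ whose $H$-coset meets $\{(p+1)/2,\dots,p-1\}$ — which is exactly the claim above. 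I expect the cleanest writeup to go through the generator argument: $1,g,g^2,\dots$ with $g\in H$, $g\not\equiv\pm1$, cannot all lie in $\{1,\dots,(p-1)/2\}$, giving the contradiction and hence primitivity.
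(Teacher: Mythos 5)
Your reduction contains a step that loses essential information and leads you to try to prove a statement that is in fact \emph{false}. From ``$S$ is a union of $H$-cosets'' you keep only the consequence ``$1\in S$, hence the coset $H\cdot 1=H$ lies in $S$,'' and then the whole burden of your proof rests on the claim that no nontrivial subgroup $H\subseteq(\ZZ/p\ZZ)^\times$ with $-1\notin H$ can be contained in $\{1,\dots,(p-1)/2\}$. That claim fails: take $p=151$ and $H=\langle 8\rangle$. Since $8^5=32768=217\cdot 151+1$, the element $8$ has order $5$, and
\[
H=\{1,\,8,\,64,\,8^3\equiv 59,\,8^4\equiv 19\}=\{1,8,19,59,64\}\subseteq\{1,2,\dots,75\}=\Bigl\{1,\dots,\tfrac{p-1}{2}\Bigr\}.
\]
So no pigeonhole, averaging, or generator-orbit argument can establish your key lemma, because the lemma is not true; your ``alternative route'' at the end circles back to the same false statement (it asks you to show the coset of $a=1$ escapes the interval, which here it does not). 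The preliminary observations you make ($-1\notin H$, hence $|H|$ odd; $|H|$ divides $(p-1)/2$) are correct but do not rescue the argument.

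What is true, and what must be used, is the \emph{full} hypothesis that $S$ is a union of $H$-cosets, i.e.\ that $hS=S$ for every $h\in H$ --- containment of the single coset $H\cdot 1$ in $S$ is strictly weaker (in the example above, $8\cdot S\neq S$ even though $\langle 8\rangle\subseteq S$). The paper exploits the stability $aS=S$ directly with a one-line invariant: summing the elements gives $a\sum_{i=1}^{(p-1)/2} i\equiv\sum_{i=1}^{(p-1)/2} i\pmod p$, and since $\sum_{i=1}^{(p-1)/2} i=(p^2-1)/8$ is a unit modulo $p$, this forces $a\equiv 1$. Hence the stabilizer of $S$ is trivial, $S$ is not a union of cosets of any nontrivial subgroup, and the CM-type is primitive. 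If you want to salvage your write-up, replace the ``$H\subseteq S$ is impossible'' step by this sum argument applied to any $h\in H\setminus\{1\}$.
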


\begin{proof}
For convenience we include the well-known argument. Write \(G=\Gal(K/\mathbb{Q})\cong(\mathbb{Z}/p\mathbb{Z})^\times\).
Suppose \(\sigma\in G\) leaves the set \(S\) stable. Identifying \(S= \{1,2,\dots,(p-1)/2\}\subset(\mathbb{Z}/p\mathbb{Z})^\times\) and $\sigma\colon \zeta_p\mapsto\zeta_p^a$,
this means that 
\[
\{a\cdot 1,\; a\cdot 2,\; \dots,\; a\cdot (p-1)/2\}
\equiv
\{1,\;2,\;\dots,\;(p-1)/2\}
\pmod p.
\]
Hence also
\[
a\sum_{i=1}^{(p-1)/2} i \equiv \sum_{i=1}^{(p-1)/2} i \pmod p.
\]
Since
\[
\sum_{i=1}^{(p-1)/2} i  
= (p^2-1)/8
\]
is a unit modulo \(p\), it follows that \(a\equiv 1\pmod p\), hence \(\sigma\) is the identity.

Thus the stabilizer of \(S\) in \(G\) is trivial, so \(S\) is not a union of cosets of any nontrivial subgroup of \(G\).
Therefore the CM-type \((K,S)\) is primitive.
\end{proof}

\begin{remark} \rm{
Writing (for $p$ an odd prime number)
$\cD_p\colon y^2=x^p+1$, the well-known CM-type on $\textrm{Jac}(\cD_p)$
and the one presented above on
$\textrm{Jac}(\cD_p)$ coincide.
As a consequence, the two Jacobians are isogenous.
An alternative way to see this, is as follows (notations as before).   
Consider the involution $\tau(x,y)=(-x,y)$ on $\cD_{2d}$.  
The quotient $\cD_{2d}/\langle\tau\rangle$ is $\cD_d\colon y^2=x^d+1$, 
and by \cite[Theorem~5]{Pau} one obtains 
$\textrm{Jac}(\cD_{2d})\sim \textrm{Jac}(\cD_d)^2$.  
Considering (now for $d$ odd) the involution $\sigma(x,y)=(1/x,y/x^d)$ on $\cD_{2d}$, 
whose quotient is the curve $\cC_d$, the same theorem gives 
$\textrm{Jac}(\cD_{2d})\sim \textrm{Jac}(\cC_d)^2$.  
Combining these relations yields
\[
  \textrm{Jac}(\cD_d)^2 \sim \textrm{Jac}(\cC_d)^2,
  \qquad\text{hence}\qquad
  \textrm{Jac}(\cD_d)\sim \textrm{Jac}(\cC_d).
\]
}
\end{remark}

 \section*{Acknowledgement}
{The first author was partially supported by CNPq grant No. 302774/2025–
4 and Fapesp grant No. 2024/00923–6.
}

 \end{document}